\newtheorem{theorem}{Theorem }
\theoremstyle{definition}
\theoremstyle{remark}
\def\iy{\infty}
\def\hf{{1\over 2}}
\def\be{\begin{equation}}
\def\ee{\end{equation}}
\def\ba{\begin{eqnarray*}}
\def\ea{\end{eqnarray*}}
\def\bae{\begin{eqnarray}}
\def\eae{\end{eqnarray}}
\def\bc{\begin{center}}
\def\ec{\end{center}}
\begin{document}
\title[Explicit Probability Densities Associated with Fuss-Catalan
Numbers] {On Explicit Probability Densities Associated with
Fuss-Catalan Numbers}

\author[ D.-Z. Liu C. Song and Z.-D. Wang]{ Dang-Zheng Liu, Chunwei Song and Zheng-Dong Wang}

\address{School of Mathematical Sciences, LMAM, Peking University, Beijing,
100871, P.R. China} \email{dzliumath@gmail.com}
\email{csong@math.pku.edu.cn}
 \email{zdwang@pku.edu.cn}

\maketitle

\begin{abstract}
In this note we give explicitly a family of  probability densities,
the moments of which are Fuss-Catalan numbers. The densities appear
naturally in random matrices, free probability and other contexts.
\end{abstract}

\noindent\textbf{Keywords} \ \  Fuss-Catalan numbers; Moments; Free
Bessel laws

\noindent\textbf{Mathematics Subject Classification (2010)} \ \
 60B20; 46L54; 05A99 
\\

In this note we study  a family of probability densities $\pi_{s}$,
$s \in \mathds{N}$, which are uniquely determined by the moment
sequences $\{m_{0}, m_{1}, \ldots, m_{k}, \dots\}$ \cite{agt}. Here

\be m_{k}=\frac{1}{sk+1}\begin{pmatrix}
   sk+k  \\
  k
\end{pmatrix}\label{FCnumber}\ee
are known as Fuss-Catalan numbers in Free Probability Theory
\cite{ns}. The densities $\pi_{s}$ belong to the class of Free
Bessel Laws \cite{bbcc} and are known to appear in several different
contexts, for instance, random matrices \cite{agt,bbcc,ns}, random
quantum states \cite{cnz}, free probability and quantum groups
\cite{bbcc}. More precisely, $\pi_{s}$ is the limit spectral
distribution of random matrices in the forms like
$X^{s}_{1}X^{*s}_{1}$ and $X_{1}\cdots X_{s} X^{*}_{s} \cdots
X^{*}_{1}$, where $X_{1},\ldots, X_{s}$ are independent $N \times N$
random matrices; in free probability we have the free convolution
relation: $\pi_{s}=\pi_{1}^{\boxtimes s}$ \cite{ns,bbcc}.

More generally, T. Banica et al \cite{bbcc} introduce a remarkable
family of probability distributions $\pi_{s,t}$ with $(s,t)\in
(0,\iy)\times (0, \iy)-(0,1)\times (1, \iy)$, called free Bessel
laws. $\pi_{s}$ is the special case where $t=1$, i.e.,
$\pi_{s,1}=\pi_{s}$. The moments of $\pi_{s,t}$ are the Fuss-Catalan
polynomials (Theorem 5.2, \cite{bbcc}): \be
m_{k}(t)=\sum_{j=1}^{k}\frac{1}{j}\begin{pmatrix}
   k-1 \\
  j-1
\end{pmatrix}\begin{pmatrix}
   sk  \\
  j-1
\end{pmatrix}t^{j}.\ee
Indeed, the following relation holds \cite{bbcc}: \be
\pi_{s,1}=\pi_{1}^{\boxtimes s}, \ \ \ \pi_{1,t}=\pi_{1}^{\boxplus
t}.\ee

The distribution $\pi_{1,t}$, the famous Marchenko-Pastur law of
parameter $t$ or called free Poisson law,  owns an explicit formula:
\be
\pi_{1,t}=\max(1-t,0)\delta_{0}+\frac{\sqrt{4t-(x-1-t)^{2}}}{2\pi
x}.\ee In special case, \be
\pi_{1}=\pi_{1,1}=\frac{1}{2\pi}\sqrt{4x^{-1}-1}.\ee Another special
case of $\pi_{s,t}$ where an explicit density formula is available
is, due to Penson and Solomon \cite{ps}, \be
\pi_{2}=\pi_{2,1}=\frac{\sqrt[3]{2}\sqrt{3}}{12\pi}\frac{
\sqrt[3]{2}(27+3\sqrt{81-12x})^{2/3}-6\sqrt[3]{x}}{x^{2/3}(27+3\sqrt{81-12x})^{1/3}}\,1_{(0,27/4]}(x).\ee

To the best of our knowledge, except for the above special cases
there are no explicit formulae available for the other $\pi_{s,t}$.
The aim of this work is to give explicit densities of
$\pi_{s}=\pi_{s,1}$, $s \in \mathds{N}$. We state our main result as
follows and its proof is based on the method: how to find an
explicit density from a given certain moment sequence used in
\cite{wigner,lwy}.

\begin{theorem}
\label{theorem} Let $\pi_{s}$, $s \in \mathds{N}$ be the unique
densities determined by the Fuss-Catalan numbers in
Eq.(\ref{FCnumber}), then we have the following formulae
\begin{equation}
\pi_{s}(x)=\frac{1_{(0,K]}(x)}{B(\frac{1}{2},\frac{1}{2}+\frac{1}{s})}
\int_{[0,1]^{s}}\frac{\big(\tau K-x\big)^{1/s-1/2}}{\sqrt{x}
\,\big(\tau K\big)^{1/s}} F(t_{1},\ldots,t_{s})\,1_{\{\tau K\geq
x\}}d^{s} t,
\end{equation}
where $K=(s+1)^{s+1}/s^{s}$,
$\tau=\displaystyle\prod_{j=1}^{s}t_{j}$ and
$F(t_{1})=\delta(t_{1}-1)$ while for $s>1$ \begin{align}
F(t_{1},\ldots,t_{s})&=
\frac{1}{B(\frac{1}{s+1},\frac{s-1}{2s+2})\displaystyle\prod_{j=2}^{s}B(\frac{j}{s+1},\frac{j}{s(s+1)})} \times\nonumber\\
&
t^{\frac{1}{s+1}-1}_{1}(1-t_{1})^{\frac{1}{2s+2}-1}\displaystyle\prod_{j=2}^{s}
t^{\frac{j}{s+1}-1}_{j}(1-t_{j})^{\frac{j}{s(s+1)}-1}.
\end{align}

\end{theorem}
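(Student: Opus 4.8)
The plan is to verify the stated density by showing that its moments reproduce the Fuss-Catalan numbers in Eq.~(\ref{FCnumber}); since the measure determined by these moments is unique, matching all moments suffices. The strategy rests on recognizing the claimed $\pi_s(x)$ as the image, under an explicit change of variables, of a product of Beta (Dirichlet-type) densities, so that moment computations reduce to products of Beta integrals.

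First I would compute the $k$-th moment $\int_0^K x^k \pi_s(x)\,dx$ by interchanging the order of integration, doing the inner $x$-integral first over the region $\{0<x\le \tau K\}$. The $x$-integral has the form $\int_0^{\tau K} x^{k-1/2}(\tau K - x)^{1/s-1/2}\,dx$, which is a Beta integral; substituting $x=\tau K u$ yields $(\tau K)^{k+1/s}B(k+\tfrac12,\tfrac1s+\tfrac12)$. After incorporating the prefactor $1/(\sqrt{x}(\tau K)^{1/s})$ and the normalization $B(\tfrac12,\tfrac1s+\tfrac12)$, the moment collapses to a constant times $K^k$ times $\int_{[0,1]^s}\tau^{k}\,F(t_1,\dots,t_s)\,d^st$, where the Beta-function ratio $B(k+\tfrac12,\tfrac1s+\tfrac12)/B(\tfrac12,\tfrac1s+\tfrac12)$ contributes a Pochhammer-type factor.

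Next I would evaluate the remaining integral $\int_{[0,1]^s}\tau^k\,F\,d^st=\int_{[0,1]^s}\big(\prod_{j=1}^s t_j\big)^k F(t_1,\dots,t_s)\,d^st$. Because $F$ factors as a product of one-dimensional Beta densities in each $t_j$, this separates into a product of one-variable moments $\int_0^1 t_j^{k}\,t_j^{\alpha_j-1}(1-t_j)^{\beta_j-1}\,dt_j/B(\alpha_j,\beta_j)=B(\alpha_j+k,\beta_j)/B(\alpha_j,\beta_j)$ with $\alpha_1=\tfrac{1}{s+1}$, $\beta_1=\tfrac{1}{2s+2}$ and $\alpha_j=\tfrac{j}{s+1}$, $\beta_j=\tfrac{j}{s(s+1)}$ for $j\ge2$. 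Each such ratio is a ratio of Gamma functions, so the whole moment becomes $K^k$ times a telescoping product of Gamma quotients. The crux is then a Gamma-function identity: I expect the exponents $\alpha_j,\beta_j$ to have been reverse-engineered precisely so that the product of $\Gamma$-ratios, together with the factor $K^k=((s+1)^{s+1}/s^s)^k$ and the $x$-integral's contribution, simplifies—via the Gauss multiplication (Legendre--Gauss) formula $\prod_{r=0}^{n-1}\Gamma(z+\tfrac{r}{n})=(2\pi)^{(n-1)/2}n^{1/2-nz}\Gamma(nz)$—to exactly $\frac{1}{sk+1}\binom{(s+1)k}{k}$.

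The main obstacle is this last algebraic identity: assembling the many Gamma factors (one pair from each $t_j$, plus the $x$-integral's $\Gamma(k+\tfrac12)/\Gamma(k+\tfrac1s+1)$-type contribution) into a single ratio and recognizing that the arguments, which run over arithmetic progressions with common differences $\tfrac1{s+1}$ and $\tfrac1{s(s+1)}$, are arranged to trigger the multiplication formula and cancel the powers of $(s+1)$ and $s$ hidden in $K^k$. I would verify the bookkeeping by first treating the base case $s=1$, where $F=\delta(t_1-1)$ forces $\tau=1$ and the moment reduces directly to the Catalan number $\frac{1}{k+1}\binom{2k}{k}$, and then carry out the general computation tracking each Gamma argument to confirm that the progressions close up correctly. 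A secondary check worth recording is that setting $k=0$ reproduces total mass $1$, confirming the normalization constants are consistent.
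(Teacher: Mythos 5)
Your strategy---verify that the stated density reproduces the Fuss--Catalan moments and invoke moment determinacy---is sound, but it is genuinely different from the paper's proof, which runs in the opposite direction. The paper symmetrizes first (working with $\sigma_s$ such that $\pi_s(x)=\sigma_s(\sqrt x)/\sqrt x$), computes the characteristic function $\sum_k m_k(-\xi^2)^k/(2k)!$, recognizes it from the ratio $\beta_{k+1}/\beta_k$ as the hypergeometric function $\,_sF_{s+1}(a_1,\dots,a_s;b_1,\dots,b_{s+1};-\tfrac K4\xi^2)$ with $a_j=\tfrac{j}{s+1}$, $b_1=\tfrac12$, $b_{j+1}=\tfrac{j+1}{s}$, peels off the $s$ parameter pairs $(a_j,b_j)$ by Euler's integral representation (this is exactly where $F$ and its Beta factors come from), reduces to $\,_0F_1$, i.e.\ a Bessel function with its classical Fourier-type integral representation, and then reads off $\sigma_s$ by inverting. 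Your route is more elementary---Beta integrals, Pochhammer ratios, and the Gauss multiplication formula, with no hypergeometric/Bessel machinery and no Fourier inversion---and the crux identity does close: with the correct parameters one gets
\begin{equation*}
\int_0^K x^k\pi_s(x)\,dx \;=\; K^k\,\frac{\prod_{j=1}^s\bigl(\tfrac{j}{s+1}\bigr)_k}{\bigl(1+\tfrac1s\bigr)_k\prod_{j=2}^s\bigl(\tfrac{j}{s}\bigr)_k}
\;=\;\frac{1}{sk+1}\binom{(s+1)k}{k},
\end{equation*}
using $\prod_{j=1}^{s}\bigl(\tfrac{j}{s+1}\bigr)_k=\tfrac{((s+1)k)!}{(s+1)^{(s+1)k}k!}$, $\prod_{j=1}^{s}\bigl(\tfrac{j}{s}\bigr)_k=\tfrac{(sk)!}{s^{sk}}$, and $\bigl(1+\tfrac1s\bigr)_k=(sk+1)\bigl(\tfrac1s\bigr)_k$. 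What your approach loses is the constructive content: the paper's argument explains how the formula is found, not merely that it is correct.

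One concrete warning: the theorem as printed is internally inconsistent, and your plan inherits the wrong branch of that inconsistency. You set $\beta_1=\tfrac{1}{2s+2}$ by reading the exponent of $(1-t_1)$, but the normalization constant is $B(\tfrac{1}{s+1},\tfrac{s-1}{2s+2})$; the exponent is a typo and the correct value is $\beta_1=\tfrac{s-1}{2s+2}$, exactly as produced by Euler's representation applied to the pair $(a_1,b_1)=(\tfrac{1}{s+1},\tfrac12)$. This matters for your computation: the telescoping requires $\alpha_1+\beta_1=\tfrac{1}{s+1}+\tfrac{s-1}{2s+2}=\tfrac12$, so that the factor $(\tfrac12)_k$ coming from the $x$-integral cancels against $(\alpha_1+\beta_1)_k$, leaving the displayed product. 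With $\beta_1=\tfrac{1}{2s+2}$ the moments fail to be Fuss--Catalan for every $s\geq 3$ (the two values coincide only at $s=2$), so your verification as literally set up would ``fail''---though carried out carefully it would detect and repair the typo, which is an advantage of the verification route. A minor slip in the same spirit: the denominator Gamma arguments run over $k+\alpha_j+\beta_j=k+\tfrac js$, an arithmetic progression of common difference $\tfrac1s$, not $\tfrac{1}{s(s+1)}$; it is this progression, together with $k+\tfrac{j}{s+1}$ in the numerator, that triggers the multiplication formula and cancels $K^k=(s+1)^{(s+1)k}/s^{sk}$.
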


\begin{proof}
First, we derive a family of symmetric distributions $\sigma_{s}$,
the $2k$-moments of which are $m_{k}$ in Eq.(\ref{FCnumber}).

Consider the characteristic function of $\sigma_{s}$ as follows:
\begin{align}
  &\int_{-\iy}^{ +\iy} e^{i \xi x} \sigma_{s}(x) d x
 =
 \sum\limits_{k=0}^{\infty}\frac{(-\xi^{2})^{k}}{(2k)!}m_{k}
 =\sum\limits_{k=0}^{\infty}\beta_{k}\frac{(-\xi^{2})^{k}}{k!}
 ,\end{align}
where \be\beta_{k}=\frac{1}{sk+1}\begin{pmatrix}
   sk+k  \\
  k
\end{pmatrix}\frac{k!}{(2k)!}=\frac{1}{sk+1}
   \frac{(sk+k)!}{(sk)!(2k)!}.\ee

   A direct computation shows that the ratio
\begin{align}\frac{\beta_{k+1}}{\beta_{k}}&=\frac{sk+1}{s(k+1)+1}\frac{1}{(2k+1)(2k+2)}
\frac{\big((s+1)k+1\big)\big((s+1)k+2\big)\cdots
\big((s+1)k+s+1\big)}{(sk+1)(sk+2)\cdots
(sk+s)}\nonumber\\
&=\frac{K}{4} \frac{(k+\frac{1}{s+1})(k+\frac{2}{s+1})\cdots
(k+\frac{s}{s+1})}{(k+\frac{1}{2})(k+\frac{2}{s})\cdots
(k+\frac{s}{s})}\frac{1}{k+1+\frac{1}{s}}\nonumber\\
&\doteq \frac{K}{4}\frac{(k+a_{1})(k+a_{2})\cdots
(k+a_{s})}{(k+b_{1})(k+b_{2})\cdots
(k+b_{s})}\frac{1}{k+b_{s+1}}\nonumber.\end{align} Here \be
b_{1}=\hf,\  a_{i}=\frac{i}{s+1} \ \textrm{and}\
b_{i+1}=\frac{i+1}{s} \  \textrm{for}\ i=1, 2, \dots, s.\ee

Therefore, using the generalized hypergeometric function, we rewrite
\begin{align}\int_{-\iy}^{ +\iy}  e^{i \xi x} \sigma_{s}(x) d x&= \,_{s}F_{s+1}(a_{1},\dots,a_{s};b_{1}, \dots, b_{s},b_{s+1};
-\frac{K}{4}\xi^{2})\\
&= \label{ftransform} \int_{[0,1]^{s}}F(t_{1},\ldots,t_{s})
\,_{0}F_{1}(b_{s+1}; -\frac{\tau K}{4}\xi^{2})\,d^{s} t.
\end{align}
Note that $a_{1}=b_{1}$ for $s=1$ but $b_{j}>a_{j}>0, j=1, 2,
\ldots, s$ when $s>1$, in Eq.(\ref{ftransform}) we have made use of
Euler's integral representation of the generalized hypergeometric
function \cite{aar}.

Next, with the help of the integral representation of Bessel
function of the first kind \cite{aar}, that is, for $\alpha>-\hf$,
 \begin{align}
  J_{\alpha}(z)&=\frac{(z/2)^{\alpha}}{\Gamma(\alpha+1)}\,_{0}F_{1}(\alpha+1;
-\frac{1}{4}z^{2})\nonumber\\
&=\frac{(z/2)^{\alpha}}{\sqrt{\pi}\Gamma(\alpha+\frac{1}{2})}
\int_{-1}^{1}\ e^{i z x}(1-x^{2})^{\alpha-\frac{1}{2}}d x,
\end{align}
we get
 \begin{align}
 \,_{0}F_{1}(\alpha+1;
-\frac{1}{4}z^{2})&=\frac{\Gamma(\alpha+1)}{\sqrt{\pi}\Gamma(\alpha+\frac{1}{2})}
\int_{-1}^{1}\ e^{i z x}(1-x^{2})^{\alpha-\frac{1}{2}}d x\nonumber\\
&=\frac{1}{B(\hf,\alpha+\hf)}\int_{-1}^{1}\ e^{i z
x}(1-x^{2})^{\alpha-\frac{1}{2}}d x.
\end{align}

Set $\alpha=1/s$, $z=\sqrt{\tau K}\xi$, we have
\begin{align}\label{integralrep}
 \,_{0}F_{1}(b_{s+1};
-\frac{\tau
K}{4}\xi^{2})&=\frac{1}{B(\hf,\hf+\frac{1}{s})}\int_{-1}^{1}\ e^{i
\sqrt{\tau K}\xi x}(1-x^{2})^{\frac{1}{s}-\frac{1}{2}}d x \nonumber\\
&=\frac{1}{B(\hf,\hf+\frac{1}{s})}\int_{-\sqrt{\tau K}}^{\sqrt{\tau
K}} e^{i \xi x} \frac{ (\tau K-x^{2})^{\frac{1}{s}-\frac{1}{2}} }
{(\tau K)^{\frac{1}{s}}} d x .
\end{align}

Combining  (\ref{integralrep}) and (\ref{ftransform}), after
interchanging the order of integration,  we then obtain
\begin{equation}
\sigma_{s}(x)=\frac{1}{B(\frac{1}{2},\frac{1}{2}+\frac{1}{s})}
\int_{[0,1]^{s}}\frac{\big(\tau K-x^{2}\big)^{1/s-1/2}}{\big(\tau
K\big)^{1/s}} F(t_{1},\ldots,t_{s})\,1_{\{\tau K\geq x^{2}\}}d^{s}
t.
\end{equation}

Notice the fact
 \be \pi_{s}(x)=\frac{\sigma_{s}(\sqrt{x})}{\sqrt{x}}1_{(0, \iy)},\ee
 the proof is then complete.
\end{proof}

At the end we remark that some properties of the density
$\pi_{s}(x)$ follow easily from Theorem \ref{theorem}, for instance,
there are no atoms; the support is $[0,K]$; the density is analytic
on $(0,K)$ (Theorem 2.1, \cite{bbcc}).



\begin{thebibliography}{99}
\bibitem{agt} Alexeev, N., G\"{o}tze,  F.   and  Tikhomirov, A.: Asymptotic distribution of singular values of
powers of random matrices, arXiv: 1002.4442v1

\bibitem{aar}  Andrews, G.E., Askey, R., Roy, R.: {\it Special Functions}, Cambridge University Press (2000)

 \bibitem{bbcc}   Banica, T., Belinschi, S.,  Capitaine, M, Collins, B.: Free Bessel laws, Canad. J. Math., to appear
 (arXiv: 0710.5931 v2)


 \bibitem{cnz} Collins, B., Nechita, I., \.{Z}yczkowski, K.: Random graph
states, maximal flow and Fuss-Catalan distributions, J. Phys. A:
Math. Theor. \textbf{43}(2010)275303

 \bibitem{lwy}
Liu, D.-Z., Wang, Z.-D., Yan, K.-H.: Density of Eigenvalues and its
Perturbation Invariance in Unitary Ensembles of Random Matrices,
Journal of Approximation Theory (2010), 19 pages,
doi:10.1016/j.jat.2010.04.005








\bibitem{ns} Nica, A., Speicher, R.: {\it Lectures on the Combinatorics of Free Probability\/},
Cambridge University Press,  2006.

\bibitem{ps} Penson, K.A., Solomon, A.I.: Coherent States from Combinatorial
Sequences, arXiv: quant-ph/0111151v1


\bibitem{wigner} Wigner, E. P.: Characteristic vectors of bordered matrices with infinite dimensions, \textit{Ann.\ Math.} \textbf{12} (1955), 548--564.


\end{thebibliography}
\end{document}